\def\lra{\longrightarrow}
\def\ind{\operatorname{ind}}
\def\Td{\operatorname{Td}}
\def\tr{\operatorname{tr}}
\def\End{\operatorname{End}}
\def\dim{\operatorname{dim}}
\newtheorem{theorem}{Theorem}[section]
\newtheorem{lemma}{Lemma}[section]
\newtheorem{proposition}{Proposition}[section]
\theoremstyle{definition}
\newtheorem{remark}{Remark}[section]
\numberwithin{equation}{section}
\title{On the Index Formula for an Isometric Diffeomorphism}
\author{A. Savin, E. Schrohe, B. Sternin}
\date{}
\begin{document}

\maketitle
\begin{abstract}
We give an elementary solution of the index problem for elliptic operators associated with the
shift operator along the trajectories of an isometric diffeomorphism of a closed smooth manifold. This solution is based on a reduction (which preserves the index) of the operator to an elliptic pseudodifferential operator
on a manifold of a higher dimension and an application of an Atiyah--Singer type formula. The final index formula is given in terms of the symbol of the operator on the original manifold.
\end{abstract}

\noindent AMS 2010 Mathematics Subject Classification.  Primary 58J20; Secondary 35R10, 35R20.
\section*{Introduction}

Elliptic theory for operators with shifts  associated with group actions on smooth manifolds
is developing intensively in recent years.  
This theory (especially in its analytical aspects) goes back to the works of 
Antonevich and his coworkers, see~\cite{Ant2,AnLe1,AnLe2} 
and the references there. 
In these works finiteness theorems (Fredholm property) were obtained in sufficient generality, 
and there emerged the problem of finding a formula for the index in terms of the topological invariants of the symbol of the operator and the manifold, on which this operator is defined.  
First attempts to solve this problem were also made in the papers of Antonevich, 
at least,  for finite groups  (see the papers cited above).
In a number of special cases (namely for operators on the noncommutative torus, etc.) 
index formulas were obtained, for instance, in the papers   \cite{Con4,CoMo2,Mos2,Per3}. 
In the general case, when a discrete group of a quite general form is acting on a manifold isometrically, the index formula was established in the book   \cite{NaSaSt17}. 
In the nonisometric case, for the group generated
by a diffeomorphism of a manifold, the index formula was shown in a recent paper  \cite{SaSt30}.
Note that, in all these papers, the proofs of the  index formul\ae\ were highly nontrivial  and
involved complicated mathematical results, notions and constructions in
noncommutative geometry and algebraic topology.

Recently, the authors (see \cite{SaSchSt1, SaSchSt2})  used the idea of
uniformization and succeeded in reducing the index  problem for an
operator with shifts to a similar problem for a  {\em pseudo\-dif\-ferential}
operator on a manifold of a higher dimension. The index of the latter operator
is computed by the well-known Atiyah--Singer formula. This approach is
attractive, because it is quite elementary and does not need the 
complicated mathematical techniques mentioned above.  However, the
question of finding an index formula for the original operator in terms of its
symbol on the original manifold was left open in the cited paper. Here we fill this gap and
give an elementary solution of the index
problem in the considered class of operators.

We now briefly describe the contents of the paper.

In Section 1 we formulate the main result: we  express  the index of
an elliptic operator with shifts,   associated with an isometric diffeomorphism of a smooth
manifold, in terms of topological invariants of the operator's symbol. Note  that this index
formula is given in terms of the noncommutative geometry of A.~Connes  \cite{Con1}.

Sections 2-4 are devoted to the proof of the index formula. In
Section 2 we
briefly recall the main results of the papers \cite{SaSchSt1,SaSchSt2}.
Namely, to an elliptic operator with shifts on a manifold we assign an elliptic differential operator
on a different closed manifold   (actually, this manifold is the mapping torus of the diffeomorphism,
which defines the shift).
The index of the operator  obtained is calculated using an Atiyah--Singer type index formula
(cf.~\cite{AtSi3}). Thus, to prove our index formula, it remains to transform an Atiyah--Singer
type  formula to a noncommutative-geometric formula. This is done in Sections  3 and 4.
Without going into the technicalities of this transformation, let us note that Section 4 is crucial here:
we introduce a large parameter in the Atiyah--Singer formula\footnote{This large parameter is easy 
to describe
geometrically. Namely, it is equal to the radius of the cosphere bundle, over which we
integrate in the Atiyah--Singer formula.} and the noncommutative geometric formula is obtained as the
parameter tends to infinity.  The technique we apply is
conceptually close to asymptotic homomorphisms, which are widely used in noncommutative geometry and elliptic
theory  (e.g., see \cite{CoHi4,Man1,NaSaSt3}).

\section{The Index Theorem} \label{sect1}

\subsection{The Analytical Index}

Let $M$ be a smooth manifold and $g:M\to M$ be a diffeomorphism. We assume that
$g$ is an isometry, that is, it preserves a Riemannian metric on $M$.

Consider the operator
\begin{equation}\label{eq-op1}
D=\sum_k D_k T^k:C^\infty(M)\lra C^\infty(M),\qquad Tu(x)=u(g(x)),
\end{equation}
where $D_k$ are (for simplicity) differential operators of order one on  $M$.

The {\em symbol} of the operator $D$  (see e.g. \cite{AnLe2}) is a  function on
the cotangent bundle   $T^*M$, whose value at the point $(x,\xi)\in T^*M$ is
the bounded operator
\begin{equation}\label{traj-symb1}
\sigma(D)(x,\xi)=\sum_k \sigma(D_k)(\partial g^{n}(x,\xi))\mathcal{T}^k:l^2(\mathbb{Z})\lra l^2(\mathbb{Z}),
\end{equation}
which acts on the space of square summable sequences. Here
$\mathcal{T}u(n)=u(n+1)$ is the shift operator on the space of sequences, and
$\partial g=({}^tdg)^{-1}:T^*M\to T^*M$ is the codifferential of the
diffeomorphism $g$.

The operator  $D$ is {\em elliptic} if the symbol $\sigma(D)(x,\xi)$ is invertible for all 
$x$ and $\xi\ne 0$.
\begin{proposition}
An elliptic operator $D$ defines a Fredholm operator $D:H^s(M)\to H^{s-1}(M)$
in the Sobolev spaces.
\end{proposition}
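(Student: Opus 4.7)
The plan is to establish the Fredholm property by constructing a two-sided parametrix for $D$ modulo compact operators. First, because $g$ is an isometry, the shift operator $T$ is a bounded (indeed unitary, relative to a $g$-invariant metric) automorphism of every Sobolev space $H^s(M)$. Consequently each summand $D_kT^k:H^s(M)\to H^{s-1}(M)$ is bounded, and so is $D$ itself.

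Next I would invert the operator-valued symbol. Ellipticity gives us that $\sigma(D)(x,\xi)$ is invertible on $l^2(\mathbb{Z})$ for every $(x,\xi)\in T^*M\setminus 0$. Using the structure of the trajectory symbol — its coefficients are the scalar principal symbols $\sigma(D_k)$ evaluated along iterates of $\partial g$, and $\mathcal{T}$ acts as a unitary — one shows that the inverse lies in the same algebra of ``$\mathcal{T}$-Laurent series'':
\[
\sigma(D)^{-1}(x,\xi)=\sum_k b_k(x,\xi)\,\mathcal{T}^k,
\]
where each $b_k$ is a scalar symbol on $T^*M\setminus 0$, positively homogeneous of degree $-1$ in $\xi$, with sufficient decay in $k$. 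Lifting the $b_k$ to classical pseudodifferential operators $Q_k$ of order $-1$ having principal symbol $b_k$, I then set $R=\sum_k Q_k T^k : H^{s-1}(M)\to H^s(M)$.

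By construction the principal symbols of $DR-I$ and $RD-I$ vanish, so these remainders are ``operators with shifts whose symbol has order $\le -1$''. Combined with Rellich's theorem — the compact embedding $H^s(M)\hookrightarrow H^{s-1}(M)$ — these remainders are compact between the relevant Sobolev spaces. Hence $R$ is a two-sided parametrix modulo compacts, which proves that $D$ is Fredholm.

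The main obstacle is the analytic handling of the infinite series defining $R$: one has to verify convergence in a suitable operator topology, prove the decay of $\{b_k\}_{k\in\mathbb{Z}}$, and validate a composition calculus for operators with shifts that guarantees multiplicativity of the operator-valued principal symbol. These points are precisely the technical core of the Antonevich-type finiteness results cited in the introduction (\cite{Ant2,AnLe1,AnLe2}); since the proposition here is background rather than a new result, I would close the argument by appealing directly to them.
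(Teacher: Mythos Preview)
Your proposal is correct and aligns with the paper's approach: both ultimately defer to the Antonevich--Lebedev finiteness results in \cite{AnLe2} (with \cite{NaSaSt17,SaSt23} cited for the general, not necessarily topologically free, case). The paper's proof consists entirely of these citations, whereas you additionally sketch the parametrix construction that underlies them; your sketch is accurate, and your acknowledgment that the convergence and composition-calculus details must be drawn from the cited literature is exactly the right concession.
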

\begin{proof}
If $g$ acts topologically freely, then the Fredholm property was proved in
\cite{AnLe2}. 
The proof in the general case  is similar (see e.g.\cite{NaSaSt17,SaSt23}).
\end{proof}

Operators of the form \eqref{eq-op1} are called {\em operators with shifts}.

\subsection{The Topological Index}

To describe the basic properties of the symbol, we introduce the subalgebra
\begin{equation*}\label{eq-alg2}
\mathcal{S}=\{A=(a_{ij}):l^2(\mathbb{Z})\to l^2(\mathbb{Z})\;|\; \forall N>1\;
\exists C_N:\;|a_{ij}|\le C_N(1+|i-j|)^{-N}\}
\end{equation*}
in the algebra of operators acting on the space $l^2(\mathbb{Z})$.

\begin{proposition}
\begin{enumerate}
\item The algebra $\mathcal{S}$ is a Fr\'echet algebra with respect to the
family of seminorms
$$
 \|a\|_N=\sup_{ij} |a_{ij}| (1+|i-j|)^{N}
$$
and is a local subalgebra in the algebra of bounded operators acting in  $l^2(\mathbb{Z})$.
\item The symbol  \eqref{traj-symb1} defines a smooth  $\mathcal{S}$-valued function
$$
\sigma(D) \in C^\infty(S^*M,\mathcal{S})
$$
on the cosphere bundle   $S^*M$.
\item The symbol is invariant under the action
\begin{equation}\label{eq-mu5}
 u(x,\xi) \longmapsto  \mathcal{T}^{-n}u(\partial g^n(x,\xi))\mathcal{T}^{n}, \quad n\in\mathbb{Z},
\end{equation}
of the group    $\mathbb{Z}$ on the algebra $C^\infty(S^*M,\mathcal{S})$.
The invariance property can be written  in terms of   matrix coefficients as
$$
 \sigma_{i,j}(D)(\partial g(x,\xi))=\sigma_{i+1,j+1}(D)(x,\xi).
$$
\end{enumerate}
\end{proposition}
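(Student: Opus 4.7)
The plan is to handle the three assertions in turn, with the local-subalgebra claim as the only genuine technical point; (2) and (3) reduce essentially to bookkeeping.

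For the Fréchet-algebra structure in (1), completeness is routine: a Cauchy sequence in every seminorm $\|\cdot\|_N$ converges entrywise, and the weighted bounds pass to the limit. Closure under multiplication follows from the standard estimate
$$|(AB)_{ij}|\le\|A\|_{N+2}\|B\|_{N+2}\sum_k(1+|i-k|)^{-N-2}(1+|k-j|)^{-N-2},$$
combined with the elementary inequality $(1+|i-k|)(1+|k-j|)\ge 1+|i-j|$ and summability of $\sum_m(1+|m|)^{-2}$; these give $\|AB\|_N\le C_N\|A\|_{N+2}\|B\|_{N+2}$, so multiplication is jointly continuous.

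The local (spectral) invariance is where I expect the main work. The plan is to realise $\mathcal{S}$ as a smooth subalgebra under a derivation. Let $\delta$ denote the \emph{formal} matrix operation $(\delta A)_{ij}=(i-j)a_{ij}$, so that $(\delta^k A)_{ij}=(i-j)^k a_{ij}$. A Schur-test argument shows $A\in\mathcal{S}$ iff $\delta^k A$ extends to a bounded operator on $l^2(\mathbb{Z})$ for every $k\ge 0$ (one direction uses rapid decay to sum the weighted rows and columns; the converse uses $|(i-j)^k a_{ij}|\le\|\delta^k A\|$). Now, for $A\in\mathcal{S}$ invertible in $B(l^2(\mathbb{Z}))$, expanding $(i-j)=(i-k)+(k-j)$ in $\sum_k A_{ik}(A^{-1})_{kj}=\delta_{ij}$ yields the bounded-operator identity
$$\delta(A^{-1})=-A^{-1}\,\delta(A)\,A^{-1},$$
and the iterated Leibniz rule expresses each $\delta^k(A^{-1})$ as a finite sum of products of $A^{-1}$ and $\delta^j(A)$ with $j\le k$, all bounded. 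Hence $A^{-1}\in\mathcal{S}$. The care needed is to justify that $\delta$ behaves like a derivation as a purely matrix operation, which is clean because no unbounded position operator enters directly.

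For (2), read (1.2) entrywise: one obtains $\sigma_{i,j}(D)(x,\xi)=\sigma(D_{j-i})(\partial g^i(x,\xi))$, where the index $n$ appearing in (1.2) is the running $l^2$-variable. Since $D=\sum_k D_kT^k$ is a finite sum, only finitely many diagonals $k=j-i$ are nonzero, each entry is the composition of the smooth principal symbol $\sigma(D_k)$ with an iterated codifferential $\partial g^i$, and compactness of $S^*M$ gives uniform bounds on all $(x,\xi)$-derivatives; this yields smoothness as a map $S^*M\to\mathcal{S}$ in every seminorm. Part (3) is then the direct verification
$$\sigma_{i,j}(D)(\partial g(x,\xi))=\sigma(D_{j-i})(\partial g^{i+1}(x,\xi))=\sigma_{i+1,j+1}(D)(x,\xi),$$
which is precisely the entrywise form of invariance under (1.5); the general $n\in\mathbb{Z}$ follows by iteration.
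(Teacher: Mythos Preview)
Your proposal is correct and considerably more detailed than the paper's own proof, which for part~(1) simply observes that $\mathcal{S}\simeq l^\infty(\mathbb{Z})\rtimes\mathbb{Z}$ is a smooth crossed product and invokes Schweitzer's general spectral-invariance theorem for such algebras, then declares (2) and (3) ``straightforward''. Your route is genuinely different: rather than quoting the crossed-product machinery, you give a self-contained argument for spectral invariance via the derivation $\delta$, characterising $\mathcal{S}$ as the smooth domain $\bigcap_k\operatorname{dom}\delta^k$ and using the Leibniz identity $\delta(A^{-1})=-A^{-1}\delta(A)A^{-1}$ iteratively. This is essentially the mechanism underlying Schweitzer's result, unpacked by hand for this particular algebra; it buys independence from the crossed-product literature at the cost of some care in justifying the Leibniz step (the cleanest way is to pass through the norm-continuous automorphism group $t\mapsto e^{itN}Ae^{-itN}$, whose norm-differentiability at invertible $A$ transfers to $A^{-1}$ by continuity of inversion, rather than arguing purely entrywise as you suggest). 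One small omission in your treatment of~(2): compactness of $S^*M$ alone does not give bounds on the derivatives of $\sigma(D_k)\circ\partial g^i$ that are uniform over all $i\in\mathbb{Z}$; here you must use that $g$ is an \emph{isometry}, so that the iterates $\partial g^i$ lie in a compact family of diffeomorphisms with uniformly bounded jets of every order.
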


\begin{proof}
Actually, we have an isomorphism   $\mathcal{S}\simeq l^\infty(\mathbb{Z})\rtimes \mathbb{Z}$
of the algebra $\mathcal{S}$  and the smooth crossed product 
of the algebra
$l^\infty(\mathbb{Z})$ of bounded sequences and the group   $\mathbb{Z}$ under the action of
$\mathbb{Z}$ by translations of sequences. Thus, we can apply the results of the paper
\cite{Schwe1} and obtain the first statement. The remaining statements are straightforward.
\end{proof}

Provided that the operator $D$ is elliptic,  its symbol is invertible and the inverse symbol
$$
 \sigma(D)^{-1}\in C^\infty(S^*M,\mathcal{S})^\mathbb{Z}
$$
exists. Here $C^\infty(S^*M,\mathcal{S})^\mathbb{Z}$  stands for the subspace of $\mathbb{Z}$-invariant
symbols. We now define a cycle in the sense of Connes \cite{Con1}
over the algebra  $\mathcal{A}=C^\infty(S^*M,\mathcal{S})^\mathbb{Z}$. The components of this cycle are:
\begin{enumerate}
\item $\Omega=\Omega(S^*M,\mathcal{S})^\mathbb{Z}$ is the algebra of $\mathbb{Z}$-invariant differential forms taking values in $\mathcal{S}$.
\item $d:\Omega\to \Omega $ is the differential, which is defined componentwise
$$
d\left(\omega_{ij}\right)=(d\omega_{ij}), \quad \omega=(\omega_{ij})\in\Omega.
$$
\item $\tau:\Omega\lra \mathbb{C}$ is the functional
\begin{equation}\label{eq-sled1}
\tau\left( \omega\right)=\int_{S^*M} \omega_{00} \cdot \Td(T^*_\mathbb{C}M),\quad \omega\in\Omega,
\end{equation}
where $ \Td(T^*_\mathbb{C}M)$ is a  $g$-invariant differential form on  $M$ that represents the Todd class
of the complexified cotangent bundle.
\end{enumerate}

\begin{proposition}
The functional \eqref{eq-sled1} is a closed graded trace, that is, it satisfies the relations
$$
\tau (d\omega)=0,\quad \tau(\omega_1\omega_2)=(-1)^{\deg \omega_1\deg\omega_2} \tau(\omega_2\omega_1)
$$
for all forms $\omega,\omega_1,\omega_2\in \Omega$.
\end{proposition}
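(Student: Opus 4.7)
The plan is to verify the two assertions---closedness and the graded trace property---separately, using the componentwise action of $d$ for the first and combining graded commutativity of differential forms with $\mathbb{Z}$-equivariance for the second.

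For closedness I would note that $d$ acts entrywise, so $(d\omega)_{00} = d(\omega_{00})$. The chosen Todd representative is closed (as a Chern--Weil characteristic form), hence $d(\omega_{00}) \wedge \Td = d(\omega_{00} \wedge \Td)$, and Stokes' theorem on the closed manifold $S^*M$ gives
\[
\tau(d\omega) = \int_{S^*M} d\bigl(\omega_{00} \wedge \Td(T^*_\mathbb{C} M)\bigr) = 0.
\]

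For the graded trace property I would first expand the $(0,0)$-matrix entry of the product:
\[
(\omega_1\omega_2)_{00} = \sum_{k\in\mathbb{Z}} (\omega_1)_{0k} \wedge (\omega_2)_{k0},
\]
with absolute convergence following from the rapid decay built into $\mathcal{S}$. Graded commutativity of scalar-valued forms gives $(\omega_1)_{0k} \wedge (\omega_2)_{k0} = (-1)^{\deg \omega_1 \deg \omega_2}(\omega_2)_{k0} \wedge (\omega_1)_{0k}$, so it suffices to establish the identity
\[
\int_{S^*M} \sum_k (\omega_2)_{k0} \wedge (\omega_1)_{0k} \cdot \Td = \int_{S^*M} \sum_k (\omega_2)_{0k} \wedge (\omega_1)_{k0} \cdot \Td.
\]
This is where the $\mathbb{Z}$-invariance $\omega_{i+n,j+n}(x,\xi) = \omega_{ij}(\partial g^n(x,\xi))$ enters: applied to each factor of the $k$-th summand on the left, it rewrites the integrand as $\bigl((\omega_2)_{0,-k} \wedge (\omega_1)_{-k, 0}\bigr)(\partial g^k(x,\xi))$. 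A change of variables through the diffeomorphism $\partial g^{-k}$ of $S^*M$---measure-preserving because $g$ is a Riemannian isometry---together with the $g$-invariance of the chosen Todd representative, removes the $\partial g^k$-pullback. Reindexing the sum by $k \mapsto -k$ then delivers precisely the right-hand side.

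The crux is combinatorial rather than analytic: the index shifts must be orchestrated so that a single change of variables simultaneously restores the $(0,0)$-pattern of the trace functional and neutralises the $g^k$-pullback on the Todd factor. Conceptually, this is the standard mechanism that turns an averaged trace on a crossed product $A \rtimes \mathbb{Z}$ into a trace---integration over $S^*M$ plays the role of a trace on the ``coefficient'' algebra, while summation over $k$ paired with the isometric change of variables implements the $\mathbb{Z}$-equivariance. The remaining routine points, namely absolute convergence of the $k$-sum and measure preservation by $\partial g$ on $S^*M$, follow immediately from the definition of $\mathcal{S}$ and from $g$ being an isometry.
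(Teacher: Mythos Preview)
Your proof is correct and follows essentially the same route as the paper. The paper omits the closedness verification (declaring ``it suffices to prove the trace property'') and, for the trace identity, applies the change of variables $\partial g^{-i*}$ first and the graded commutativity swap afterward, whereas you do these in the opposite order; both arrive at the same reindexing $k\mapsto -k$.
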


\begin{proof}
It suffices to prove the trace property. Given   $\omega',\omega''\in \Omega$,  we have:
\begin{multline*}
 \tau(\omega'\omega'')=\sum_i\int_{S^*M} \omega'_{0i}\omega_{i0}'' \Td(T^*_\mathbb{C}M)=
\sum_i\int_{S^*M} \partial g^{-i*} (\omega'_{0i}\omega_{i0}'') \Td(T^*_\mathbb{C}M)=\\
=(-1)^{\deg\omega'\deg\omega''}\sum_i\int_{S^*M}   (\omega''_{0i}\omega_{i0}') \Td(T^*_\mathbb{C}M)=
(-1)^{\deg\omega'\deg\omega''}\tau   (\omega'' \omega ').
\end{multline*}
\end{proof}

The cycle over    $\mathcal{A}$ defines (e.g., see \cite{Con1}) the functional
\begin{equation}\label{eq-indt1}
 \begin{array}{ccc}
  \ind_t: K_1(\mathcal{A})  & \longrightarrow & \mathbb{C} \\
  a & \longmapsto & \sum_j C_j\tau(a^{-1}da)^{2j-1}
 \end{array}
\end{equation}
on the $K_1$-group of the algebra. Here $C_j=\frac{(j-1)!}{(2\pi i)^j(2j-1)!}$.
This functional is called the  {\em topological index}.

\subsection{The Index Theorem}
\begin{theorem}\label{th1}
The index $\ind D$ of an elliptic operator  $D$ with shifts on a manifold  $M$ is equal to
\begin{equation}\label{eq-index1}
 \ind D=\ind_t [\sigma(D)],
\end{equation}
where $[\sigma(D)]\in K_1(C^\infty(S^*M,\mathcal{S})^\mathbb{Z}) $ is the class of the symbol in  $K$-theory
and the topolo\-gical index $\ind_t [\sigma(D)]$ is defined in \eqref{eq-indt1}.
\end{theorem}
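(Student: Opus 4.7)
The strategy, following the outline sketched in the introduction, is to reduce Theorem~\ref{th1} to the classical Atiyah--Singer theorem via uniformization and then to translate the resulting cohomological formula into the noncommutative-geometric expression~\eqref{eq-indt1}.

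First, I would invoke the construction of \cite{SaSchSt1,SaSchSt2} to pass from $D$ to an index-preserving elliptic pseudodifferential operator $\widetilde D$ on the mapping torus $X = (M\times \mathbb{R})/\mathbb{Z}$ of $g$, where $\mathbb{Z}$ acts by $(x,t)\mapsto(g(x),t-1)$. The symbol $\sigma(\widetilde D)$ and the vector bundles on which $\widetilde D$ acts are built from the matrix-valued symbol $\sigma(D)$ by a cutoff and Fourier transform in the sequence variable of $l^2(\mathbb{Z})$. The equality $\ind\widetilde D=\ind D$ and the explicit description of $\sigma(\widetilde D)$ in terms of $\sigma(D)$ are the main outputs of that work, which I would simply quote.

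Next, I would apply the Atiyah--Singer theorem to $\widetilde D$ and write the answer as an integral over $S^*X$ of the odd Chern character of $[\sigma(\widetilde D)]$ paired with $\Td(T^*_\mathbb{C} X)$. Since the projection $X\to S^1$ is a fibration with fibre $M$, one has $\Td(T^*_\mathbb{C} X)=\pi^*\Td(T^*_\mathbb{C} M)$ up to a trivial factor from the one-dimensional $S^1$-direction, and the $\mathbb{Z}$-covering $M\times\mathbb{R}\to X$ lets one unfold the integral over $S^*X$ into an integral over $S^*M$ with the $l^2(\mathbb{Z})$-data reorganised as matrix entries. At this stage the universal constants $C_j=(j-1)!/((2\pi i)^j(2j-1)!)$ appear through the standard passage between the even Chern character on the cone and the odd one on the sphere. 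This is the content I would develop in Section~3.

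The crucial and most delicate step is Section~4: introducing the radius $R$ of the cosphere bundle on $X$ as a large parameter and letting $R\to\infty$. For finite $R$ the uniformization provides only a finite-rank approximation of $\sigma(D)$ in the $l^2(\mathbb{Z})$-variable, and the contribution of off-diagonal matrix entries to the Atiyah--Singer density must be controlled using the rapid decay built into the algebra $\mathcal{S}$. In the limit, the $\mathbb{Z}$-invariance of $\sigma(D)$ collapses the $l^2(\mathbb{Z})$-trace to the $00$-entry, producing exactly the graded trace~\eqref{eq-sled1}, while the universal polynomial representing the Chern character reproduces the differential form $(\sigma(D)^{-1}d\sigma(D))^{2j-1}$. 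The main obstacle is to make this asymptotic identification rigorous term by term, matching each summand of the Atiyah--Singer integrand with the corresponding summand of~\eqref{eq-indt1} and verifying that the error terms vanish in the limit; the asymptotic-homomorphism viewpoint mentioned in the introduction is the natural organisational framework, treating the $R$-family of reduced symbols as an asymptotic morphism into $\mathcal{A}=C^\infty(S^*M,\mathcal{S})^\mathbb{Z}$ that intertwines the Chern characters.
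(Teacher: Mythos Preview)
Your overall architecture---uniformization to the mapping torus, an Atiyah--Singer type formula there, and a large-$R$ asymptotic identification---matches the paper. But two concrete points in your plan do not line up with what actually makes the argument go through.

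First, you are missing the intermediate stop on $M\times\mathbb{R}$. The uniformized operator $\mathcal{D}$ on $M_\mathbb{Z}$ is not built directly from $D$; it arises from the \emph{external product} $\widetilde D\#A$ of the lifted shift operator with the model operator $A=\partial_t+t$ on $\mathbb{R}$. Correspondingly, the paper does not pass from $S^*M_\mathbb{Z}$ straight back to $S^*M$: the asymptotic argument (your Section~4) only identifies $\ind_t\sigma(\mathcal{D})$ with a topological index for the shift-operator symbol $\sigma(\widetilde D\#A)$ living on $S(T^*M\times\mathbb{R}^2)$, still one dimension too high. A separate, purely algebraic step (the paper's Section~3) then uses multiplicativity of the topological index under external product and the fact that $[\sigma(A)]\in K^1(\mathbb{S}^1)$ is the Bott generator, giving $\ind_t\sigma(\widetilde D\#A)=\ind_t\sigma(D)\cdot\ind_t\sigma(A)=\ind_t\sigma(D)$. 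Without this Bott step your ``unfold $S^*X$ to $S^*M$'' is a dimensional mismatch you have not explained how to bridge.

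Second, the asymptotic homomorphism runs in the opposite direction from what you wrote, and the mechanism is not a finite-rank truncation or a Fourier transform. The map $\omega\mapsto\omega_R$ sends $\mathbb{Z}$-invariant $\mathcal{S}$-valued forms on $S(T^*M\times\mathbb{R}^2)$ \emph{into} $\End\mathcal{E}$-valued forms on $S^*M_\mathbb{Z}$ by $(\omega_{ij})\mapsto (h_R^*\alpha^i\omega_{ij})|_{S^*M_\mathbb{Z}}$; the point is that $(\sigma^{-1})_R$ is an approximate inverse of $\sigma_R$ up to families of lower order in the weighted $l^2$-scale $\mu_{s,R}(n)=(n^2+R^2)^s$, so the Chern--Simons integrand on $S^*M_\mathbb{Z}$ agrees with $[( \sigma^{-1}d\sigma)^{2j-1}]_R$ up to errors whose trace is $O(R^{-1})$. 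The exact identity $\int_{S^*M_\mathbb{Z}}\tr_\mathcal{E}[\omega_R\,\Td]_{top}=\tau(\omega)$ (Lemma~\ref{lem-3x}) is what converts the $\End\mathcal{E}$-trace into the $00$-entry functional, via unfolding the $t$-integral over $[0,1]$ into a sum over $\mathbb{Z}$ translates. Your description of the limit as collapsing a finite-rank approximation is not the right picture; the bundle $\mathcal{E}$ is infinite-dimensional throughout, and the control comes from the order calculus in the $R$-dependent weights, not from rapid off-diagonal decay alone.
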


An index formula of this type was first obtained in   \cite{NaSaSt17}.
However, the proof was quite complicated. In particular, it substantially used
$K$-theory of crossed products (the periodicity theorem and the direct image mapping in
equivariant $K$-theory).

Here we give a direct proof of the index formula based on the ideas of uniformization.
Using these ideas, an index formula for  $D$ was obtained in the preceding paper  \cite{SaSchSt1}
in terms of the topological index of a special elliptic pseudodifferential operator   ($\psi$DO).
To prove  Theorem~\ref{th1}   we transform the topological index defined in
 \cite{SaSchSt1} to the form $\ind_t [\sigma(D)]$. 
For this purpose we recall in the next section the main results
of the cited paper, and in the subsequent sections we prove that the  topological indices are equal.

\section{Pseudodifferential Uniformization}

We recall the main results of the paper  \cite{SaSchSt1} in a form convenient for us.

\subsection{Reduction to a $\psi$DO}

Given an elliptic operator $D$ of the form  \eqref{eq-op1}, we extend the isometry
$g:M\to M$ to an isometry of the infinite cylinder  $M\times \mathbb{R}$ with local coordinates $x$ and $t$
$$
\begin{array}{ccc}
\widetilde{g}: M\times \mathbb{R} & \lra & M\times \mathbb{R},\\
(x,t) & \longmapsto & (g(x),t+1),
\end{array}
$$
and define  the associated shift operator   $ (\widetilde{T}u)(x,t)=u(g(x),t+1)$. Consider the operator
$$
 \widetilde{D}=\sum_k D_k \widetilde{T}^k:C^\infty(M\times \mathbb{R})\lra C^\infty(M\times \mathbb{R}),
$$
on the product $M\times \mathbb{R}$  and the operator
$$
 A=\frac\partial{\partial  t}+t:C^\infty(\mathbb{R})\to C^\infty(\mathbb{R}),
$$
on the line.   Define the  {\em external product} of the operators $\widetilde{D}$ and $A$:
\begin{equation}\label{eq-product1}
\widetilde{D}\# A=\left(
                   \begin{array}{cc}
                        \widetilde{D} &  A\\
                       -  A^* &   \widetilde{D}^*
                   \end{array}
                \right):C^\infty(M\times\mathbb{R},\mathbb{C}^2)\lra C^\infty(M\times\mathbb{R},\mathbb{C}^2).
\end{equation}
Here the adjoint operators are taken with respect to  the inner product on   $L^2(M\times \mathbb{R})$.

The isometry $\widetilde{g}$ defines a free proper action of the group  $\mathbb{Z}$ on the cylinder.
Thus, the corresponding orbit space is a smooth manifold. We will treat functions on the cylinder as
functions on the orbit space taking values in functions on the fibers of the projection
$$
M\times \mathbb{R}\lra  (M\times\mathbb{R})/\mathbb{Z}=M_\mathbb{Z}.
$$

Let $\mathcal{E}$  be the vector bundle
$$
\mathcal{E}=(M\times \mathbb{R}\times l^2(\mathbb{Z}))/ \mathbb{Z}
$$
over $M_\mathbb{Z}$ with the fiber $l^2(\mathbb{Z})$. Here the group
$\mathbb{Z}$ acts as
\begin{equation}\label{eq-mumu1}
(x,t,u)\longmapsto (g^n(x),t+n,\mathcal{T}^{-n}u),\qquad \mathcal{T}u(k)=u(k+1),\quad n\in \mathbb{Z}.
\end{equation}
Denote by
\begin{equation}\label{eq-op66}
 \mathcal{D}:C^\infty(M_\mathbb{Z},\mathcal{E}) \to C^\infty(M_\mathbb{Z},\mathcal{E})
\end{equation}
a differential operator with the operator-valued symbol
\begin{equation}\label{orb-symbol1}
\sigma(\mathcal{D})(x,\xi,t,\tau)=(\sigma(D)(x,\xi))\# (i\tau+t+n):l^2(\mathbb{Z},\mu_{\xi,\tau,s})\lra l^2(\mathbb{Z},\mu_{\xi,\tau,s-1}).
\end{equation}
Here the measure is defined as   $\mu_{\xi,\tau,s}(n)=(\xi^2+\tau^2+n^2)^s$, and the space $l^2$
consists of the sequences square summable with respect to this measure.
Note that the symbol  \eqref{orb-symbol1} is not homogeneous in the covariables   $\xi,\tau$.

The operator \eqref{eq-op66} is  {\em elliptic},  if its symbol \eqref{orb-symbol1}
is invertible for large  $\xi^2+\tau^2$, 
and the norm of the inverse symbol is uniformly bounded in   $x,t,\xi,\tau$.
It is proven in \cite{SaSchSt2}  that if the original operator   $D$ is elliptic, then
the differential operator $\mathcal{D}$ is elliptic, the operators  $\widetilde{D}\#A$ and
$\mathcal{D}$ are Fredholm operators in suitable spaces of functions and one has:
\begin{equation}\label{eq-eq1}
 \ind D=\ind \widetilde{D}\#A=\ind \mathcal{D}.
\end{equation}
The index $\ind \mathcal{D}$ was expressed in the cited paper by an Atiyah--Singer type   formula, which we now describe.

\subsection{The Index of the $\psi$DO $\mathcal{D}$}\label{sec2}

The space of $\End\mathcal{E}$-valued differential forms on the cosphere bundle
$S^*M_{\mathbb{Z}}$ of a sufficiently large radius $R$ is denoted by
$\Omega(S^*M_{\mathbb{Z}} ,\End\mathcal{E})$.
This algebra is endowed with the differential  $d$ satisfying the Leibniz rule
(the differential is well defined, because   $\mathcal{E}$ is a flat bundle).
The operation of taking the trace of an operator in   $l^2(\mathbb{Z})$ gives
a (partially defined)  mapping
$$
\tr_\mathcal{E}: \Omega(S^*M_{\mathbb{Z}},\End\mathcal{E})\lra \Omega(S^*M_{\mathbb{Z}}).
$$
{\em The topological index} of an elliptic symbol $\sigma=\sigma(\mathcal{D})$ is the number
\begin{equation}\label{eq-indt2}
 \ind_t\sigma =\sum_j C_j\int_{S^*  M_{\mathbb{Z}}}\tr_\mathcal{E}
\left[(\sigma ^{-1}d\sigma)^{2j-1}\Td(T^*_\mathbb{C}M) \right]_{top},
\end{equation}
where $[\ldots]_{top}$ stands for the top degree component (that is, of degree $2\dim M+1$). We suppose here
that  $M_\mathbb{Z}$ is endowed with a metric of the form   $h+dt^2$,
where $h$ is a $g$-invariant metric on $M$. In this case, because of the fact that  $g$ is isometric,
this metric is well defined and we have   equality
$$
\Td(T^*_\mathbb{C}M)=\Td(T^*_\mathbb{C}M_\mathbb{Z})
$$
of differential forms that represent the Todd classes of the complexified cotangent bundles
of the manifolds   $M$ and $M_\mathbb{Z}$.

It is proven in the paper  \cite{SaSchSt2}  that  \eqref{eq-indt2} is well defined and we have the equality
\begin{equation}\label{ind-luke1}
 \ind \mathcal{D}=\ind_t \sigma(\mathcal{D}).
\end{equation}
By \eqref{eq-eq1} and \eqref{ind-luke1}, to prove Theorem \ref{th1} it suffices to prove the equality
$$
\ind_t\sigma(D)=\ind_t \sigma(\mathcal{D})
$$
of the topological indices of the original operator with shifts $D$ and the $\psi$DO   $\mathcal{D}$.

\section{The Topological Indices of the Operators with Shifts on  
$M$ and $M\times   \mathbb{R}$  are Equal}

Similarly as in Section~\ref{sect1}, we define the topological index for operators of the form
\begin{equation}\label{eq-1-1}
B=\sum_k B_k\left(x,-i\frac\partial {\partial x},t,-i\frac\partial {\partial t}\right)\widetilde{T}^k:C^\infty(M\times\mathbb{R})\longrightarrow C^\infty(M\times\mathbb{R})
\end{equation}
on the product $M\times \mathbb{R}$,  where the shift operator  $\widetilde{T}$
was defined above. Here the operators $B_k$ are differential operators in the
variables $x,t$ with coefficients polynomial in $t$ and smooth in $x$.  The
operator $\widetilde{D}\# A$ is an example of operators of the form
\eqref{eq-1-1}.

The symbol  $\sigma(B)$ of the operator $B$ is a smooth operator-valued function on the cosphere bundle
$S(T^*M\times \mathbb{R}^2)$ and is considered as the element (see \cite{SaSchSt1,SaSchSt2})
\begin{eqnarray}\label{eq-1-2}
\sigma(B) \in C^\infty(S(T^*M\times \mathbb{R}^2),\mathcal{S})^\mathbb{Z},\\ \nonumber \sigma(B)=
\sum_k \sigma(B_k)(\partial g^{n}(x,\xi),t,\tau)\mathcal{T}^k.
\end{eqnarray}
Note that, by contrast with \eqref{eq-1-1},  here the group action  is trivial along the variables
 $t,\tau$.

By  analogy with the above, we define a cycle over the algebra
$$
 C^\infty(S(T^*M\times \mathbb{R}^2),\mathcal{S})^\mathbb{Z}
$$
and the corresponding topological index
\begin{equation}\label{eq-indt4}
\ind_t: K_1\left(C^\infty(S(T^*M\times \mathbb{R}^2),\mathcal{S})^\mathbb{Z}\right)\lra \mathbb{C}.
\end{equation}
Moreover, to define this topological index, we use the same differential form $\Td(T^*_\mathbb{C}M)$ as in
\eqref{eq-sled1}.

\begin{proposition}
The following equality holds:
\begin{equation}\label{eq-mu3}
\ind_t \sigma(D)=\ind_t \sigma(\widetilde{D}\# A).
\end{equation}
\end{proposition}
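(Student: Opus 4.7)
By \eqref{eq-product1}, the symbol of the external product is the $2\times 2$ block matrix
\[
\sigma(\widetilde D\#A)(x,\xi,t,\tau)=\begin{pmatrix}\sigma(D)(x,\xi) & i\tau+t\\ i\tau-t & \sigma(D)(x,\xi)^*\end{pmatrix},
\]
in which $\sigma(D)$ depends only on $(x,\xi)$ and the off-diagonal scalar entry $i\tau+t=\sigma(A)$ only on $(t,\tau)$. Thus $\sigma(\widetilde D\#A)$ is, up to a homotopy through invertible elements, the external product of $[\sigma(D)]\in K_1(C^\infty(S^*M,\mathcal{S})^{\mathbb{Z}})$ with the commutative ``Bott-type'' scalar class $[i\tau+t]$ supported in the $(t,\tau)$-plane. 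My plan is to show that the Connes-type functional $\ind_t$ is multiplicative under this external product and that the $(t,\tau)$-factor contributes exactly~$1$.

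First, by homotopy invariance of the topological index, I would deform $\sigma(\widetilde D\#A)$ through elliptic symbols to the decoupled external-product form $\sigma(D)\#(i\tau+t)$ in which the $(x,\xi)$- and $(t,\tau)$-dependencies are fully separated. Second, I would split the exterior differential as $d=d_{(x,\xi)}+d_{(t,\tau)}$ and expand $(\sigma^{-1}d\sigma)^{2j-1}$ using Leibniz and the block structure. Because $\sigma(A)=i\tau+t$ is a commutative scalar, the 1-form $\sigma(A)^{-1}d\sigma(A)=(id\tau+dt)/(i\tau+t)$ satisfies $\sigma(A)^{-1}d\sigma(A)\wedge\sigma(A)^{-1}d\sigma(A)=0$; hence, after taking the $(0,0)$-matrix coefficient required by the trace in \eqref{eq-sled1}, only the terms containing exactly one factor of $d_{(t,\tau)}$ survive. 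I expect these surviving terms to factor, modulo exact forms, as a Chern-character-type form of degree $2j-3$ in $(x,\xi)$ wedged with $\sigma(A)^{-1}d\sigma(A)$.

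The third step is to carry out the integration in the $(t,\tau)$-directions. A standard residue calculation gives
\[
\oint\frac{id\tau+dt}{i\tau+t}=2\pi i,
\]
so the extra 1-form contributes a numerical factor $2\pi i$. Absorbed into the constant $C_j=(j-1)!/((2\pi i)^j(2j-1)!)$, this factor promotes $C_j$ to $C_{j-1}$ while the $(x,\xi)$-form-degree drops by~$2$; after re-indexing the sum over $j$, one obtains exactly $\ind_t\sigma(D)$, as demanded by \eqref{eq-mu3}.

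The principal obstacle will be the rigorous implementation of this fiber integration: the sphere bundle $S(T^*M\times\mathbb{R}^2)$ is not literally a circle bundle over $S^*M$ but, fiberwise over $M$, a double suspension of it. A clean reduction therefore requires either a fiberwise deformation in which $\{t=\tau=0\}$ is realized as a codimension-$2$ subsphere carrying a copy of $S^*M$, or, in the spirit of the large-parameter / asymptotic-homomorphism approach flagged in the Introduction, introducing a scaling radius in the $(t,\tau)$-direction and extracting the leading asymptotics of the topological index. Once this geometric step is in place, the remaining combinatorial bookkeeping of block-matrix traces, sign conventions, $\mathbb{Z}$-invariance, and normalization constants is routine.
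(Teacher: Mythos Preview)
Your overall strategy coincides with the paper's: show $\sigma(\widetilde D\#A)=\sigma(D)\#\sigma(A)$, invoke multiplicativity of $\ind_t$ under external product, and use $\ind_t\sigma(A)=1$ because $[\sigma(A)]$ is the Bott element. The paper, however, does not carry out the block-matrix/fiber-integration computation you outline; it simply observes \eqref{eq-mu1}, cites the multiplicativity \eqref{eq-mu2} as a standard lemma (Lemma~9.10 of \cite{NaSaSt17}), and concludes. Your first ``homotopy'' step is therefore unnecessary: the symbol is already literally the external product.

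Your direct attack on the multiplicativity has two concrete gaps. First, the inference ``$\sigma(A)^{-1}d\sigma(A)\wedge\sigma(A)^{-1}d\sigma(A)=0$, hence only one $d_{(t,\tau)}$ factor survives in $\tr(\sigma^{-1}d\sigma)^{2j-1}$'' does not follow: $\sigma^{-1}$ for the $2\times2$ block $\sigma(D)\#\sigma(A)$ involves $(\sigma(D)\sigma(D)^*+|\sigma(A)|^2)^{-1}$, so the $(x,\xi)$- and $(t,\tau)$-dependencies are entangled in every entry of $\sigma^{-1}d\sigma$, and the scalar self-wedge vanishing of $\sigma(A)^{-1}d\sigma(A)$ alone does not isolate the surviving terms. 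Second, the constant bookkeeping is wrong as stated: $2\pi i\cdot C_j=(j-1)!/\bigl((2\pi i)^{j-1}(2j-1)!\bigr)\ne C_{j-1}$; the missing factor $2(2j-1)$ has to come from the combinatorics of the block expansion (choice of position and signs for the $d_{(t,\tau)}$ contribution together with the $2\times2$ matrix trace), which you have not tracked. These are exactly the details that make the multiplicativity lemma nontrivial, and the paper sidesteps them by citing it. The geometric obstacle you flag (that $S(T^*M\times\mathbb{R}^2)$ is a double suspension, not a circle bundle over $S^*M$) is real and is also absorbed into the cited lemma.
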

\begin{proof}
Indeed, we have
\begin{equation}\label{eq-mu1}
 \sigma(\widetilde{D}\# A)= \sigma(D)\# \sigma(A),
\end{equation}
where on the right-hand side we have the external product of the symbol   $\sigma(D)$ and the $\mathbb{Z}$-invariant symbol $\sigma(A)$ (cf. \eqref{eq-product1}).

The topological index \eqref{eq-indt4} has the following  multiplicative property
\begin{equation}\label{eq-mu2}
\ind_t (\sigma(D)\# \sigma(A))=(\ind_t \sigma(D))(\ind_t \sigma(A))
\end{equation}
with respect to the external product. The proof of this property is standard
(e.g., see \cite{NaSaSt17}, p.\ 107, Lemma 9.10), and we omit the details here.
Further, we have $\ind_t \sigma(A)=1$, since $[\sigma(A)]\in K^1(\mathbb{S}^1)=K^0(\mathbb{R}^2)$
is the Bott element.  Using \eqref{eq-mu1} and \eqref{eq-mu2} we obtain the desired equality \eqref{eq-mu3}.
\end{proof}

\section{The Topological Indices of the Operator with Shifts on   $M\times\mathbb{R}$
and $\psi$DO on  $M_\mathbb{Z}$ are Equal}\label{sec3}

The purpose of this section is to obtain the equality
\begin{equation}\label{eq-mu4}
\ind_t \sigma(\widetilde{D}\# A)=\ind_t \sigma(\mathcal{D})
\end{equation}
of the topological indices of the operator  with shifts $\widetilde{D}\# A$
on $M\times\mathbb{R}$ and the $\psi$DO $\mathcal{D}$  on the quotient
$M_\mathbb{Z}=(M\times\mathbb{R})/\mathbb{Z}$.

\subsection{A Reduction to the Unit Sphere Bundle}

The left and right-hand sides in the desired equality \eqref{eq-mu4}  are defined by the symbols
$\sigma(\widetilde{D}\#A)$ and $\sigma(\mathcal{D})$, respectively. By construction, the components of
these symbols satisfy the relation
\begin{equation}\label{eq-78}
 \sigma_{ij}(\mathcal{D})=\alpha^{i}\sigma_{ij}(\widetilde{D}\#A),
\end{equation}
where  $\alpha$ is the mapping induced by the diffeomorphism
$$
T^*M\times \mathbb{R}^2\to T^*M\times \mathbb{R}^2, \quad
(x,\xi,t,\tau)\mapsto (x,\xi,t+1,\tau).
$$
The  relation \eqref{eq-78} is straightforward. Indeed, if   $a=\sum a_k(n)\mathcal{T}^k$, then $a_{ij}=a_{j-i}(i)$. Hence
$$
 \sigma_{ij}(\widetilde{D}\# A)=\sigma_{ij}(D)\# \sigma(A)=\sigma(D_{j-i})(\partial g^i(x,\xi))\# \sigma(A)(t,\tau)\delta_{i-j}.
$$
On the other hand, $\sigma_{ij}(\mathcal{D})= \sigma(D_{j-i})(\partial g^i(x,\xi))\# \sigma(A)(t+i,\tau)\delta_{i-j}$.

Further, the topological index of the symbol   $\sigma(\mathcal{D})$ is determined in terms of integration over the cosphere bundle of a sufficiently large radius   $R$. Consider the mapping  $h_R:T^*M_\mathbb{Z}\lra T^*M_\mathbb{Z}$
$$
h_R(x,\xi,t,\tau)=(x,R\xi,t,R\tau),
$$
which takes the unit spheres to the spheres of radius   $R$. Thus, the topological index of the symbol
$\sigma=\sigma(\mathcal{D})$ is equal to
\begin{equation}\label{eq-indt2qq}
 \ind_t \sigma =\sum_j C_j\int_{S^*  M_{\mathbb{Z}}}\tr_\mathcal{E}
\left[(h_R^*\sigma ^{-1}d(h_R^*\sigma))^{2j-1}\Td(T^*_\mathbb{C}M) \right]_{top},
\end{equation}
where we integrate over the unit sphere bundle. The right-hand side  in \eqref{eq-indt2qq}
does not depend on   $R$, since the topological index is homotopy invariant.
On the other hand, we will show that the expression on the right-hand side in \eqref{eq-indt2qq}
tends to $\ind_t(\sigma(\widetilde{D}\#A))$  as $R\to\infty$. This will give the desired equality \eqref{eq-mu4}.

To study the mentioned limit, we first study in detail the mapping, which takes
a symbol of the form  $\sigma(\widetilde{D}\#A)$ to the symbol
$h^*_R\sigma(\mathcal{D})_{S^*M_\mathbb{Z}}$, and then, at the end of the
section, we prove equality  \eqref{eq-mu4}.

\subsection{An Asymptotic Homomorphism}

Consider the family of mappings
 \begin{equation}\label{eq-star2}
\begin{array}{ccc}
 \Omega  (S(T^*M\times \mathbb{R}^{2 }), \mathcal{S})^\mathbb{Z}&\lra &  \Omega(S^*M_{\mathbb{Z} },\End\mathcal{E})\vspace{1mm}\\
\omega=\{\omega_{ij}\} & \longmapsto & \omega _R=\left\{ (h^*_R\alpha^{i}\omega_{ij})|_{S^*M_{\mathbb{Z} }}
\right\},
\end{array}
\end{equation}
with  parameter $R\ge 1$.  
Here $\alpha$ is the mapping of differential forms induced by the diffeomorphism
$(x,\xi,t,\tau)\mapsto (x,\xi,t+1,\tau)$. 
In \eqref{eq-star2} and below we treat differential forms on the
cosphere bundle as homogeneous differential forms on the ambient vector bundle.

The mapping  \eqref{eq-star2} is well defined for all $R$. 
Indeed, the smoothness of the resulting operator-valued
form  $\omega_R$ is proved by a direct computation; its invariance with respect to the action
 \eqref{eq-mu5} follows, because $\omega$ is  $\mathbb{Z}$-invariant.
We have
$$
 d(\omega_R)=( d \omega)_R, \qquad \forall \omega\in \Omega  (S(T^*M\times \mathbb{R}^{2 }), \mathcal{S}),
$$
which is easy to check, using the fact that the exterior differential   $d$ commutes with the induced mappings  $h^*_R,$ $\alpha^{n}$, and also with the restriction $\cdot|_{S^* M_\mathbb{Z}}$.

\begin{remark}The mapping \eqref{eq-star2} on functions can be written  more explicitly:
\begin{equation*}\label{eq-star1}
\begin{array}{ccc}
 C^\infty(S(T^*M\times \mathbb{R}^{2}), \mathcal{S})^\mathbb{Z}  &\lra &  C^\infty(S^*M_{\mathbb{Z}},\End\mathcal{E}),\vspace{1mm}\\
f=\{f_{ij}(x,\xi,t,\tau)\} & \longmapsto & f_R= \{f_{ij}(x,R\xi,t+i,R\tau)\}.
\end{array}
\end{equation*}
\end{remark}

Let $U\subset M$ be a coordinate neighborhood. In the domain $T^*U\times \mathbb{R}^{2}$ with the coordinates $x,\xi,t,\tau$ consider the space of
$\mathcal{S}$-valued forms  homogeneous of degree zero and smooth outside the zero section.  Denote this space by
$\Omega(T^*U\times \mathbb{R}^{2}_{t,\tau},\mathcal{S})$ and define the following decomposition of this space:
\begin{equation}\label{eq-deco1}
\Omega(T^*U\times \mathbb{R}^{2},\mathcal{S})= \bigoplus_{i+j\le n+2}\Omega_{i,j},\quad n=\dim M,
\end{equation}
where $\Omega_{i,j}$ is the subspace of forms that  contain  $i$ differentials $dt$ ($i=0,1$) and
$j$ differentials $d\xi$, $d\tau$  $(0\le j\le n+1)$. Obviously, we have
$
 \Omega_{i,j}\cdot\Omega_{i',j'}\subset \Omega_{i+i',j+j'}.
$

An operator family $a(R)$  with parameter $R\ge 1$, which acts in the space of sequences,
is called  {\em a family of order $k$}   if for each fixed $s$ it
is bounded as an operator family acting in the spaces
$$
l^2(\mathbb{Z},\mu_{s,R})\to l^2(\mathbb{Z},\mu_{s-k,R}),
$$
uniformly in $R\ge 1$. 
The norms in these spaces are defined by the family of measures $\mu_{s,R}(n)=(n^2+R^2)^{s}$.

\begin{proposition}\label{lem-3}
The mapping \eqref{eq-star2} has the following properties:
\begin{enumerate}
\item  Given $a\in\Omega_{i,j} $, we have
\begin{equation}\label{eq-est3}
 a_R=R^j\times\left(\text{family of order $-i-j$}\right).
\end{equation}
\item This mapping is an asymptotic homomorphism of algebras in the following sense:  for any $a\in\Omega_{i,j} $ and $a'\in\Omega_{i',j'} $ we have
\begin{equation}\label{eq-est4}
 (a'a)_R-(a'_R)(a_R)=R^{j+j'}\times\left(\text{family of order $-i-i'-j-j'-1$}\right).
\end{equation}
\end{enumerate}
\end{proposition}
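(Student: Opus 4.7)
The plan is to verify both claims by direct computation in a local coordinate chart on $T^*U\times\mathbb{R}^2$, using the explicit coordinate formula for $\omega\mapsto\omega_R$ recorded in the preceding Remark, together with the $\mathcal{S}$-algebra decay of matrix entries.

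For part (1), I would take $a\in\Omega_{i,j}$ and expand it locally, writing each matrix entry $a_{mn}$ as a scalar form with $i$ factors of $dt$ and $j$ factors drawn from $d\xi,d\tau$, whose coefficient function $F_{mn}(x,\xi,t,\tau)$ is homogeneous of degree zero in $(\xi,\tau)$. Applied to such a monomial, the pullback $h_R^*$ multiplies each $d\xi$ and each $d\tau$ by $R$, contributing the overall factor $R^j$; the degree-zero homogeneity kills the $R$-dependence of the coefficient on the unit cosphere, since $F_{mn}(x,R\xi,t+m,R\tau)=F_{mn}(x,\xi,t+m,\tau)$ there. After extracting $R^j$, the residual matrix $\{F_{mn}(x,\xi,t+m,\tau)\}$ satisfies the rapid bound $|F_{mn}|\le C_N(1+|m-n|)^{-N}$ coming from the Fr\'echet seminorms on $\mathcal{S}$. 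The order-$(-i-j)$ bound on the weighted spaces $l^2(\mathbb{Z},\mu_{s,R})$ is then obtained by a Schur-test argument using the standard weight comparison $(m^2+R^2)\le C(1+|m-n|)^2(n^2+R^2)$; the $i$ factors of $dt$ and the $j$ factors of $d\xi,d\tau$ together account, through $h_R^*$, for the expected smoothing by $i+j$ orders, uniformly in $R\ge 1$.

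For part (2), I would compute both sides, using that $\alpha^m$ is a $t$-shift (hence multiplicative) and $h_R^*$ a ring homomorphism on forms, to get
\[
((a'a)_R)_{mn}=\sum_k h_R^*((\alpha^m a'_{mk})(\alpha^m a_{kn}))|_{S^*M_\mathbb{Z}},
\]
\[
(a'_R\,a_R)_{mn}=\sum_k h_R^*((\alpha^m a'_{mk})(\alpha^k a_{kn}))|_{S^*M_\mathbb{Z}}.
\]
The discrepancy lies entirely in the second factor: $\alpha^m$ versus $\alpha^k$. A Taylor expansion in $t$ gives
\[
\alpha^m a_{kn}-\alpha^k a_{kn}=(m-k)\int_0^1\alpha^{k+\theta(m-k)}(\partial_t a_{kn})\,d\theta.
\]
The prefactor $|m-k|$ is absorbed by the rapid $\mathcal{S}$-decay of $a'_{mk}$ (one loses a polynomial power but the sum over $k$ still converges), while the decisive observation is that $\partial_t$ picks up \emph{no} factor of $R$ under $h_R^*$, whereas $\partial_\xi$ or $\partial_\tau$ would each contribute one. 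Consequently the difference is smaller by one power of $R$ than the product's natural $R^{j+j'}$, which through the scaling heuristic of part (1) translates precisely into one better order in the operator family, producing the claimed $R^{j+j'}\times(\text{order }-i-i'-j-j'-1)$.

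The principal obstacle is the technical bookkeeping: one must separate matrix indices from differential indices cleanly, track how $h_R^*$, the shift $\alpha^m$, and the decomposition $\Omega=\bigoplus_{i+j\le n+2}\Omega_{i,j}$ interact, and verify that all operator-norm estimates on the $R$-dependent weighted spaces $l^2(\mathbb{Z},\mu_{s,R})$ are uniform in $R$. The conceptual heart, however, is the very simple observation that $\partial_t$ costs one power of $R$ less than $\partial_\xi$ or $\partial_\tau$ under the rescaling $h_R$; this single fact is what powers the asymptotic-homomorphism property and, downstream, the passage to the limit $R\to\infty$ needed in \eqref{eq-mu4}.
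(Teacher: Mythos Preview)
Your proposal contains a genuine gap that affects both parts: you have misidentified the variables in which the forms are homogeneous. You write that the coefficient function $F_{mn}(x,\xi,t,\tau)$ is ``homogeneous of degree zero in $(\xi,\tau)$'' and conclude $F_{mn}(x,R\xi,t+m,R\tau)=F_{mn}(x,\xi,t+m,\tau)$. But the space $\Omega(T^*U\times\mathbb{R}^2_{t,\tau},\mathcal{S})$ consists of forms homogeneous of degree zero in \emph{all three} fibre variables $(\xi,t,\tau)$ (these are the fibre coordinates of the bundle over $U$, and the sphere in question is $\xi^2+t^2+\tau^2=1$, cf.\ the hypersurface $S(T^*M\times\mathbb{R}^2_{t,\tau})$ appearing in the proof of Lemma~\ref{lem-3x}). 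Hence a coefficient in $\Omega_{i,j}$, once $dt^i\,d\xi^J d\tau^k$ with $|J|+k=j$ is split off, is homogeneous of degree $-(i+j)$ in $(\xi,t,\tau)$. After the substitution $(\xi,t,\tau)\mapsto(R\xi,t+n,R\tau)$ and restriction to $|\xi|^2+\tau^2=1$, $t\in[0,1]$, this yields a factor of size $\bigl((t+n)^2+R^2\bigr)^{-(i+j)/2}\sim(n^2+R^2)^{-(i+j)/2}$, and it is precisely this decay in $n$ that produces the order $-(i+j)$ estimate. Under your hypothesis the residual matrix $\{F_{mn}(x,\xi,t+m,\tau)\}$ would have no decay in the row index $m$ at all, and a Schur test could deliver at best order $0$; the differentials $dt,d\xi,d\tau$ carry no smoothing on the $l^2$-spaces by themselves.

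The same misconception undermines your Part~2. Your stated ``conceptual heart'' --- that $\partial_t$ picks up no factor of $R$ under $h_R^*$ while $\partial_\xi,\partial_\tau$ each pick up one --- does not by itself give an extra order; $R^{-1}\times(\text{order }k)$ is not the same as $(\text{order }k-1)$. The correct mechanism is that $\partial_t$ lowers the homogeneity degree in $(\xi,t,\tau)$ by one: if $b_{Jk}$ is homogeneous of degree $-(i+j)$, then $\partial_t b_{Jk}$ is homogeneous of degree $-(i+j)-1$, and after the substitution this gives the improved decay $(n^2+R^2)^{-(i+j+1)/2}$, i.e.\ one genuine order better (this is exactly what the paper proves in Lemma~\ref{lem-7}). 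Your Taylor-in-$t$ expansion and Schur-test strategy are perfectly reasonable and close to the paper's approach, but they only succeed once the joint $(\xi,t,\tau)$-homogeneity is used correctly.
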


\begin{proof}
1. Let us prove \eqref{eq-est3}.  We have
\begin{equation}\label{eq-aaa1}
a=\sum_k a_k\mathcal{T}^k,\qquad a_k\in\Omega(T^*U\times\mathbb{R}^2, l^\infty(\mathbb{Z})),
\end{equation}
where $a_k$ is treated as an operator family acting on the space $l^2(\mathbb{Z})$. 
In addition,
the condition $a\in \Omega(T^*U\times \mathbb{R}^{2},\mathcal{S})$ implies that the sequence of norms of the
elements $ a_k $ tends to zero faster than any power of  $k$ as
$k\to \infty$. We have
\begin{equation}\label{eq-qqq1}
a_R=\sum_k (a_k)_R \mathcal{T}^k.
\end{equation}
Let us estimate the norms of the operators in this expression. First, we have
\begin{equation}\label{eq-tt1}
\| \mathcal{T}^k \|_{l^2(\mathbb{Z},\mu_{s,R})\to l^2(\mathbb{Z},\mu_{s,R}) }\le C_s (1+k^2)^{|s|/2}.
\end{equation}
This follows from the estimate
\begin{equation}\label{eq-est7}
\sup_{n\in \mathbb{R}} \left(\frac{(n+k)^2+R^2}{n^2+R^2}\right) \le C  (1+k^2)
\end{equation}
uniformly in  $R\ge 1$ and $k\in\mathbb{R}$. 
This estimate  is easy to obtain by a direct computation.
Second, denote the element  $a_k$   by $b$ for brevity. We get
\begin{equation}\label{eq-omegaR1}
 b_R(n)=h^*_R \alpha^{n}b(n) .	
\end{equation}
Denote the norm in the space of differential forms on
$S^*M_\mathbb{Z}$ with continuous coefficients (that is, the maximum of the modulus of the coefficients of the form)
by $\|\cdot\|$. We will now estimate the norm of the expression \eqref{eq-omegaR1} in the local coordinates $x,t,\xi,\tau$. We write 
$$
b= \sum_{|J|+k=j} dt^i d\xi^J d\tau^k b_{ Jk}(t,\xi,\tau),
$$
where the form $b_{Jk}(t,\xi,\tau)$ has only differentials  $dx$ and is homogeneous in
$(t,\xi,\tau)$ of degree $-i-j$.  Substituting this expression in \eqref{eq-omegaR1}, we obtain the following estimate
\begin{equation}\label{eq-39}
 \|b_R(n)\| \le  C R^{j}(n^2+R^2)^{-i-j}\sum_{|J|+k=j}\|b_{Jk}\| \le C R^{j}(n^2+R^2)^{-i-j} \|b\|
\end{equation}
where $t\in[0,1]$.

Let us go back to  \eqref{eq-qqq1}. Each term in this sum is of the form
$$
 R^j\times\left(\text{family of order $-i-j$}\right).
$$
Moreover,  by the estimates  \eqref{eq-tt1} and \eqref{eq-39},
and the rapid decay of the coefficients   $a_k$, this series converges.
This shows that the corresponding operator acting on the   $l^2$-spaces admits the desired representation \eqref{eq-est3}.

2. We will now prove \eqref{eq-est4}. Let $a=\sum a_k\mathcal{T}^k$,  $a'=\sum a'_l\mathcal{T}^l$.
We have
\begin{equation}\label{eq-h1}
a'_Ra_R=\sum_{kl} (a'_k)_R \mathcal{T}^k  (a _l)_R \mathcal{T}^l=
\sum_{kl} (a'_k)_R \bigl[ \mathcal{T}^k  (a _l)_R\mathcal{T}^{-k}\bigr ]\mathcal{T}^{k+l},
\end{equation}
and also
\begin{equation}\label{eq-h2}
(a' a)_R=\sum_{kl} (a'_k \mathcal{T}^k   a _l  \mathcal{T}^l)_R=
\sum_{kl} (a'_k)_R \bigl[   (\mathcal{T}^{k}a _l\mathcal{T}^{-k})_R \bigr ]\mathcal{T}^{k+l}.
\end{equation}
Hence, the expression on the left-hand side in~\eqref{eq-est4} can be written out as
\begin{equation}\label{eq-33}
 (a'a)_R-(a'_R)(a_R)=
\sum_{kl} (a'_k)_R \bigl[ (\mathcal{T}^{k}a _l\mathcal{T}^{-k})_R -  \mathcal{T}^k  (a _l)_R\mathcal{T}^{-k} \bigr]\mathcal{T}^{k+l} .
\end{equation}
Note that the series in \eqref{eq-h1} and \eqref{eq-h2} converge rapidly by the already proved item 1.

Denote the subspace of scalar forms similar to  $\Omega_{i,j}$ (see \eqref{eq-deco1})  by $\Omega_{i,j}(T^*U\times\mathbb{R}^2)$.
\begin{lemma}\label{lem-7}
We have the estimate
\begin{equation}\label{eq-34}
    \|h^*_R (\alpha^{n}-\alpha^{n+m})b \|\le C\frac{(1+m^2)^{(i+j+2)/2} R^j}{(n^2+R^2)^{(i+j+1)/2}}\|b\|
\end{equation}
uniformly in  $b\in \Omega_{i,j}(T^*U\times\mathbb{R}^2) $, $m\in\mathbb{Z}$ and $R\ge 1$.
Here on the left-hand side of the inequality we have the norm in the space of differential forms on  $S^*(U\times[0,1])$ with continuous coefficients,
while $\|b\|$ is a seminorm in the space  $\Omega(T^*U\times\mathbb{R}^2)$.
\end{lemma}
\begin{proof}
We will obtain the estimate  in the local coordinates $x,\xi,t,\tau$ on $T^*U\times T^*[0,1]$.  Let
$$
 b=\sum_{|J|+k=j}dt^i d\xi^Jd\tau^k b_{Jk}(t,\xi,\tau),
$$
where the form $b_{Jk}(t,\xi,\tau)$ has only differentials $dx$ and is homogeneous in $(t,\xi,\tau)$ of degree $-i-j$. We now get
\begin{multline}\label{eq-35}
\|h^*_R (\alpha^{n}-\alpha^{n+m})b\|\le\\
\le \sum_{|J|+k=j}R^{|J|+k}\bigl\|dt^id\xi^Jd\tau^k [b_{Jk}(t+n,R\xi,R\tau) - b_{Jk}(t+n+m,R\xi,R\tau)]\bigr\|\le\\
\le C R^{j}\sum_{|J|+k=j}\|b_{Jk}(t+n,R\xi,R\tau) - b_{Jk}(t+n+m,R\xi,R\tau)\|.
\end{multline}
Let us estimate the norm on the right-hand side in \eqref{eq-35}.
Applying Lagrange's remainder formula and the inequality  \eqref{eq-est7}, we obtain
\begin{multline}\label{eq-36}
\|b_{Jk}(t+n,R\xi,R\tau) - b_{Jk}(t+n+m,R\xi,R\tau)\| \le \\
\le C \cdot |m|\left((n+t+\theta m)^2+R^2\right)^{-(i+j+1)/2}\|b_{Jk}\|\le
 C' \frac{ (1+m^2)^{(i+j+2)/2}}{(n^2+R^2)^{(i+j+1)/2}}\|b\|,
\end{multline}
where $\theta\in[0,1]$.  The inequalities \eqref{eq-35} and \eqref{eq-36} give the desired inequality \eqref{eq-34}.

The proof of Lemma~\ref{lem-7} is now complete.
\end{proof}

We now estimate the expression \eqref{eq-33} using \eqref{eq-34},   \eqref{eq-est3}, and the fact that the
coefficients $a'_k$ and $a_l$ decay rapidly.   We obtain the desired expression \eqref{eq-est4}.

The proof of Proposition~\ref{lem-3} is complete.
\end{proof}

\subsection{Proof of the Equality of the Topological Indices}

\begin{proposition}\label{prop13}
Given an elliptic symbol $\sigma(B)\in C^\infty(S(T^*M\times \mathbb{R}^2),\mathcal{S})^\mathbb{Z}$,
the corresponding symbol $(\sigma(B))_R\in C^\infty(S^*M_\mathbb{Z},\End\mathcal{E})$, which we will denote by
$\sigma(\mathcal{B})$, is elliptic and we have
$$
 \ind_t \sigma(B)=\ind_t\sigma(\mathcal{B}).
$$
\end{proposition}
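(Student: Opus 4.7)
The plan is to apply the asymptotic homomorphism of Proposition~\ref{lem-3} to relate the $\psi$DO cycle on $S^*M_{\mathbb{Z}}$ computing $\ind_t\sigma(\mathcal{B})$ back to the cycle over $C^\infty(S(T^*M\times\mathbb{R}^2),\mathcal{S})^{\mathbb{Z}}$ defining $\ind_t\sigma(B)$, and to exploit the fact that $\ind_t\sigma(\mathcal{B})$ is independent of the rescaling parameter $R$ by homotopy invariance.

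First I would establish ellipticity of $\sigma(\mathcal{B})=(\sigma(B))_R$ for $R$ large. Applying~\eqref{eq-est4} to $a=\sigma(B)$ and $a'=\sigma(B)^{-1}$, both of bi-degree $(0,0)$, yields
\[
(\sigma(B)^{-1})_R\cdot\sigma(\mathcal{B}) = I + r_R,
\]
with $r_R$ a family of order $-1$. A Neumann series then inverts the right-hand side for $R$ large, producing $\sigma(\mathcal{B})^{-1} = (\sigma(B)^{-1})_R + \rho_R$ with $\rho_R$ of order $-1$, which establishes ellipticity.

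Next I would substitute this expression together with $d\sigma(\mathcal{B})=(d\sigma(B))_R$ (valid since $d$ commutes with the mapping $(\cdot)_R$) into the topological index formula~\eqref{eq-indt2qq}, and iterate~\eqref{eq-est4} to collapse the $(2j-1)$-fold product into a single image under $(\cdot)_R$ plus an error of strictly lower order:
\[
(\sigma(\mathcal{B})^{-1}d\sigma(\mathcal{B}))^{2j-1} = \bigl((\sigma(B)^{-1}d\sigma(B))^{2j-1}\bigr)_R + E_R.
\]
For the main term, the trace $\tr_\mathcal{E}$ unfolds via $\mathbb{Z}$-invariance: writing $(\omega_R)_{ii}=h_R^*\alpha^i\omega_{ii}$ and using the identity $\omega_{ii}(x,\xi,t,\tau)=\omega_{00}(\partial g^{i}(x,\xi),t,\tau)$, followed by the volume-preserving change of variables $(x,\xi,t)\mapsto(\partial g^i(x,\xi),t+i)$, the sum $\sum_i\int_{S^*M_{\mathbb{Z}}}$ collapses to the integral of $\omega_{00}\Td(T^*_\mathbb{C} M)$ over the sphere of radius $R$ in $T^*M\times\mathbb{R}^2$. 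Homogeneity of $\omega_{00}$ then identifies this with the unit-sphere integral, which is precisely the trace $\tau(\omega_{00}\Td)$ in the cycle for $\ind_t\sigma(B)$. Weighted summation over $j$ reproduces $\ind_t\sigma(B)$.

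The main obstacle I expect is the quantitative control of the error $E_R$: each differential $d$ raises the $R$-weight according to~\eqref{eq-est3} and each product introduces an error one order below the naive product by~\eqref{eq-est4}, so one must verify that after extracting the top-degree component on $S^*M_{\mathbb{Z}}$ and integrating, the trace of $E_R$ contributes $O(R^{-1})$. Combined with the $R$-independence of $\ind_t\sigma(\mathcal{B})$, this forces the equality~\eqref{eq-mu4} upon letting $R\to\infty$.
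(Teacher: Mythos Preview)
Your proposal is correct and follows essentially the same route as the paper: ellipticity of $\sigma(\mathcal{B})$ via Proposition~\ref{lem-3} and a Neumann series, then iteration of \eqref{eq-est4} to replace $(\sigma_R^{-1}d\sigma_R)^{2j-1}$ by $((\sigma^{-1}d\sigma)^{2j-1})_R$ modulo a lower-order error, then an unfolding/homogeneity computation to identify $\int_{S^*M_\mathbb{Z}}\tr_\mathcal{E}[\omega_R\Td]_{top}$ with $\tau(\omega)$, and finally $R$-independence plus $R\to\infty$.

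Two points where the paper is sharper than your sketch and which you should make explicit. First, the error control: the paper pins down the precise shape $\varepsilon_j=R^{n}\times(\text{family of order }-n-2)$, $n=\dim M$, by observing that in the top-degree component exactly one factor $b_R\in\Omega_{1,0}$ and exactly $n$ factors $c_R\in\Omega_{0,1}$ must occur (decomposing $d\sigma=a+b+c$ as in \eqref{eq-deco1}); combined with the trace estimate $\tr(\text{family of order }m)=O(R^{m+1})$ for $m<-1$ (Lemma~\ref{lem-3x}, part~1), this is what yields $O(R^{-1})$. Second, your unfolding argument is morally the paper's Lemma~\ref{lem-3x}, part~2, but note that what you call ``the sphere of radius $R$'' is really the cylinder $\mathbb{R}_t\times S(T^*M\times\mathbb{R}_\tau)$; the identification with $\tau(\omega)$ uses that this hypersurface and $S(T^*M\times\mathbb{R}^2_{t,\tau})$ both inherit the same degree-zero form, together with the decomposition $\mathbb{R}_t=\bigcup_i R^{-1}([0,1)+i)$.
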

\begin{proof}
1. For brevity we will use the notation $\sigma=\sigma(B)$, $\sigma(\mathcal{B})=\sigma_R$.
By Proposition~\ref{lem-3}  the families $\sigma_R$ and $(\sigma^{-1})_R$ are inverses of each other up to
families of order  $-1$. This implies that the family  $\sigma_R$ is invertible
for all sufficiently large $R$. Hence, the topological index
\begin{equation}\label{eq-indt2a}
 \ind_t \sigma(\mathcal{B})=\sum_j C_j\int_{S^* M_{\mathbb{Z}}}\tr_\mathcal{E}
\bigl[\left(\sigma_R^{-1}d\sigma_R\right)^{2j-1} \Td(T^*_\mathbb{C}M)\bigr]_{top}
\end{equation}
is well defined.

2. We claim that the following decomposition holds
\begin{equation}\label{eq-qq1}
\bigl[(\sigma_R^{-1}d\sigma_R)^{2j-1}\Td(T^*_\mathbb{C}M)\bigr]_{top}=\bigl[((\sigma^{-1}d\sigma)^{2j-1})_R \Td(T^*_\mathbb{C}M)\bigr]_{top}+\varepsilon_j,
\end{equation}
where the family $\varepsilon_j$ is of the form
\begin{equation}\label{eq-99a}
\varepsilon_j=R^{n}\times\left(\text{family of order $-2-n$}\right), n=\dim M.
\end{equation}
Indeed, let us prove \eqref{eq-qq1}  using Proposition~\ref{lem-3}. Note that it suffices to obtain this
equality in a neighborhood   $S^*(U\times[0,1])$, where $U\subset M$  is a
neighborhood with coordinates $x$.  We decompose the $1$-form  $d\sigma$ in $U$
$$
d\sigma=a+b+c\in \Omega_{0,0}\oplus\Omega_{1,0}\oplus\Omega_{0,1}
$$
according to the decomposition \eqref{eq-deco1}.  Then the form on the left-hand side in  \eqref{eq-qq1} is equal to
\begin{equation}\label{eq-99b}
\Bigl[  \big((\sigma^{-1})_R +\varepsilon) (a_R+b_R+c_R)\bigr)^{2j-1}\Td(T^*_\mathbb{C}M)\Bigr]_{top},
\end{equation}
where $\varepsilon$ is a family of order $-1$ (here we used Proposition~\ref{lem-3}, item 2).
Opening the brackets in \eqref{eq-99b},  we represent this expression as a finite sum of monomials,
each of which is a product of the elements  $(\sigma^{-1})_R$, $\varepsilon$, $a_R$, $b_R$, $c_R$,
and the class $\Td(T^*_\mathbb{C}M)$. Consider an arbitrary monomial of this form. Since we
consider only the top degree component of this monomial, there is exactly one factor   $b_R$
and exactly $n=\dim M$   factors $c_R$. This fact together with Proposition~\ref{lem-3}, item 1, imply
that if the monomial contains at least one factor   $\varepsilon$, then this monomial is of the form
\eqref{eq-99a}. We will not consider such monomials below. Further, in this monomial, we can
replace a product of the form  $x_Ry_R$ by the expression $(xy)_R$, because the difference
of these two expressions is a family of order less by one  (Proposition~\ref{lem-3}, item 2).
So, the family \eqref{eq-99b} is equal to
\begin{equation}\label{eq-99c}
\Bigl[\bigl((\sigma^{-1} (a+b+c))^{2j-1}\bigr)_R\Td(T^*_\mathbb{C}M)\Bigr]_{top}=
\Bigl[\bigl((\sigma^{-1}d\sigma)^{2j-1}\bigr)_R \Td(T^*_\mathbb{C}M)\Bigr]_{top}
\end{equation}
up to a family of the form \eqref{eq-99a}. Hence the proof of \eqref{eq-qq1} is now complete.

3. We now substitute \eqref{eq-qq1}  in \eqref{eq-indt2a} and obtain
\begin{multline}\label{eq-multi1}
\ind_t \sigma(\mathcal{B})= \sum_j C_j\int_{S^* M_{\mathbb{Z}}}\tr_\mathcal{E}
\left[\bigl[(\sigma^{-1}d\sigma)^{2j-1}\bigr]_R\Td(T^*_\mathbb{C}M)+\varepsilon_j \right]_{top}=\\
 =\sum_j C_j  \tau\bigl[\left(\sigma^{-1}d\sigma\right)^{2j-1}\bigr] +O(R^{-1})
= \ind_t \sigma(B)+O(R^{-1})\lra\ind_t \sigma(B).
\end{multline}
Here the second equality follows from Lemma~\ref{lem-3x}   below,
the third equality is just the definition of the topological index $\ind_t \sigma(B)$ (see~\eqref{eq-indt4}).
Finally, the limit is taken as $R\to\infty$. 
Because $\ind_t \sigma(\mathcal{B})$ does not depend on $R$,
we see that \eqref{eq-multi1}  yields the desired equality \eqref{eq-mu4}.

The proof of Proposition~\ref{prop13} is complete.
 \end{proof}

\begin{lemma}\label{lem-3x}
\begin{enumerate}
\item Given a family $a(R):l^2(\mathbb{Z})\to l^2(\mathbb{Z})$  of order $m<-1$,  we have
\begin{equation}\label{trace4a}
 \tr  a(R) =O\left(R^{m+1}\right),\quad \text{if } R\ge 1.
\end{equation}
\item Given a form $\omega\in  \Omega  (S(T^*M\times \mathbb{R}^{2 }), \mathcal{S})^\mathbb{Z}$,   we have
\begin{equation}\label{trace4}
 \int_{S^*M_{\mathbb{Z} }} \tr_\mathcal{E} \bigl[\omega_R\Td(T^*_\mathbb{C}M)\bigr]_{top}=\tau(\omega)\quad
\text{for all $R\ge 1$.}
\end{equation}
\end{enumerate}
\end{lemma}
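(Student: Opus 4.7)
My plan is to treat the two parts separately, each following from direct manipulation of the definitions. For Part~1, I would extract pointwise diagonal matrix-coefficient bounds from the operator norm estimate. Testing the order-$m$ bound
$\|a(R)u\|_{l^2(\mathbb{Z},\mu_{s-m,R})} \leq C_s\|u\|_{l^2(\mathbb{Z},\mu_{s,R})}$
against $u=\delta_j$ (for which $\|\delta_j\|^2_{l^2(\mathbb{Z},\mu_{s,R})}=(j^2+R^2)^s$) yields
$\sum_i |a_{ij}(R)|^2(i^2+R^2)^{s-m} \leq C_s^2(j^2+R^2)^s$,
from which the diagonal bound $|a_{ii}(R)|\leq C(i^2+R^2)^{m/2}$ follows. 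Summing and comparing with the integral $\int_{\mathbb{R}}(x^2+R^2)^{m/2}\,dx$, which via the substitution $x=Ry$ evaluates to $R^{m+1}\int_{\mathbb{R}}(y^2+1)^{m/2}\,dy$ (convergent precisely because $m<-1$), gives $|\tr a(R)|=O(R^{m+1})$.

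For Part~2, the plan is to unfold the sum $\tr_\mathcal{E}\omega_R=\sum_i(h_R^*\alpha^i\omega_{ii})|_{S^*M_\mathbb{Z}}$ into a single integral. Iterating the $\mathbb{Z}$-invariance $\omega_{ij}(\partial g(x,\xi),t,\tau)=\omega_{i+1,j+1}(x,\xi,t,\tau)$ gives $\omega_{ii}=(\partial g^i)^*\omega_{00}$, whence $h_R^*\alpha^i\omega_{ii}=(\Psi_i^R)^*\omega_{00}$ for the map $\Psi_i^R(x,\xi,t,\tau)=(g^i(x),R\partial g^i_\xi\xi,t+i,R\tau)$. Lifting the integral over $S^*M_\mathbb{Z}$ to a fundamental domain $F\subset S^*(M\times\mathbb{R})$, using the $g$-invariance of $\Td(T^*_\mathbb{C}M)$ (which gives $(\Psi_i^R)^*\Td=\Td$), and noting that since $g$ is an isometry the translates $\Psi_i^R(F)$ assemble into the sphere bundle $S^*_R(M\times\mathbb{R})$ of radius $R$ over $M\times\mathbb{R}$, one obtains
\[
\int_{S^*M_\mathbb{Z}}\tr_\mathcal{E}\bigl[\omega_R\Td\bigr]_{top}=\int_{S^*_R(M\times\mathbb{R})}[\omega_{00}\Td]_{top}.
\]
By $0$-homogeneity of $\omega_{00}$ in $(t,\xi,\tau)$, this top-degree integral is invariant under the choice of radial section, hence equals $\int_{S(T^*M\times\mathbb{R}^2)}[\omega_{00}\Td]_{top}=\tau(\omega)$, the trace on $\Omega(S(T^*M\times\mathbb{R}^2),\mathcal{S})^\mathbb{Z}$ defined in analogy with~\eqref{eq-sled1}.

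The main technical obstacle is the final step: identifying the integrals over the two natural sphere bundles $S^*_R(M\times\mathbb{R})$ (cotangent spheres of radius $R$, non-compact in $t$) and $S(T^*M\times\mathbb{R}^2)$ (unit sphere in the full $(\xi,t,\tau)$-direction, compact over $M$) via the homogeneity convention of Section~\ref{sec3}. This rescaling invariance is what produces the $R$-independence of the left-hand side asserted by the lemma; everything else is routine unfolding.
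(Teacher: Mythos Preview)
Your proposal is correct and follows essentially the same route as the paper. Part~1 is identical (diagonal bound from the order-$m$ estimate on $\delta_k$, then compare the sum $\sum_k(k^2+R^2)^{m/2}$ to the integral via $k=Ry$). For Part~2 the paper carries out the same unfolding you describe, only phrased as a chain of equalities rather than via an explicit map $\Psi_i^R$: it writes $\omega=\omega^0(t)+dt\,\omega^1(t)$, replaces $\omega_{ii}^1$ by $\omega_{00}^1$ using $\mathbb{Z}$-invariance (your $(\partial g^i)^*$-change of variables, absorbed by $g$-invariance of $\Td$), and then identifies the integrals over the non-compact cylinder $\mathbb{R}_t\times S(T^*M\times\mathbb{R}_\tau)$ and the compact sphere $S(T^*M\times\mathbb{R}^2_{t,\tau})$ via $0$-homogeneity in $(t,\xi,\tau)$ --- exactly the step you flag as the main technical point. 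A small terminological caveat: $S^*_R(M\times\mathbb{R})$ is not literally a radial section for the $(t,\xi,\tau)$-scaling (it misses the poles $t'=\pm 1$ after projection), but since this exceptional set has positive codimension the top-degree integrals still agree.
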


\begin{proof}
1. Let us prove the estimate \eqref{trace4a}.  To this end, let $\{e_k\}$ be the standard orthonormal base in
 $l^2(\mathbb{Z})$ ($e_k(k)=1$ and $e_k(n)=0$ if $k\ne n$). Since the family $a(R)$ is of order $m$, we get
$$
 \|a(R)e_k\|_{l^2(\mathbb{Z},\mu_{-m,R})}\le C \|e_k\|_{l^2(\mathbb{Z})},\qquad \forall k.
$$
Hence, we obtain the inequality
$
 |(a(R)e_k,e_k)_{l^2(\mathbb{Z})} | (k^2+R^2)^{-m/2}\le C.
$
This gives the desired estimate of the trace:
\begin{equation}\label{eq-est1}
|\tr a(R)|\le \sum_k |(a(R)e_k,e_k)_{l^2(\mathbb{Z})}|\le C \sum_k  (k^2+R^2)^{m/2}.
\end{equation}
This series converges if $m<-1$, and its sum is easily estimated
\begin{equation}\label{eq-est2}
\sum_k  (k^2+R^2)^{m/2}\le C_1 \int_{\mathbb{R}} dk(k^2+R^2)^{m/2}=
C_2 R^{1+m}\int_0^\infty \frac {dr}{(1+r^2)^{-m/2}}= O(R^{1+m}).
\end{equation}
Equations \eqref{eq-est1}  and \eqref{eq-est2} give the desired estimate \eqref{trace4a}.

2. We decompose the form $\omega\in  \Omega  (S(T^*M\times \mathbb{R}^{2 }), \mathcal{S})^\mathbb{Z}  $
as
$$
\omega= \omega^{0}(t)+dt\omega^{1}(t),
$$
where the forms $\omega^{i}(t)$ do not contain $dt$.

One can show (cf. the proof of Proposition~\ref{prop13}, item 2) that
the top degree component of the form   $\omega_R\Td(T^*_\mathbb{C}M)$ takes
values in operators of order $-\dim M-1\le -2$. Hence, this component takes
values in trace class operators and the expression on the right-hand side in
\eqref{trace4} is well defined. Further, a direct computation gives the
equalities
\begin{multline}\label{eq-longest1}
 \int_{S^*M_{\mathbb{Z}}} \tr_\mathcal{E} \bigl[\omega_R\Td(T^*_\mathbb{C}M)\bigr]_{top}=
\sum_{i,k}\int_{S^*M_{\mathbb{Z}}}  \bigl[ h^*_R\alpha^{i }dt^k\omega^k_{i,i}(t) \Td(T^*_\mathbb{C}M)\bigr]_{top}=\\
=\int_{S^*M_{\mathbb{Z}}}\sum_i \bigl[ h^*_R\alpha^{i }dt\omega^{1}_{00}(t)\Td(T^*_\mathbb{C}M)\bigr]_{top}=
\int_{S^*M_{\mathbb{Z}}}\left[\sum_i   R^{-1} dt \omega^1_{0,0}\left(\frac{t+i}R\right)\Td(T^*_\mathbb{C}M)\right]_{top}=\\
=\int_{\mathbb{R}_t\times S(T^*M\times\mathbb{R}_\tau)} \left[    dt  \omega^1_{0,0}\left(t\right)\Td(T^*_\mathbb{C}M)\right]_{top}
=  \tau(\omega).
\end{multline}
Here the first equality follows from the definition of the mapping \eqref{eq-star2},
the second follows from the fact that  $\omega$ is $\mathbb{Z}$-invariant.
The third equality follows, because  $\omega$ is homogeneous in $\xi,t,\tau$. The fourth equality in   \eqref{eq-longest1} follows from the decomposition
$$
\mathbb{R}_t=\bigcup_{i} \frac 1{R}([0,1)+i),
$$
and the fifth equality follows from the definition of the trace   $\tau$ (see~\eqref{eq-sled1})
and the fact that the integrals of the form
\begin{equation}\label{eq-q8}
\sum_{k=0,1} dt^k\omega^k_{0,0}\left(t\right)\Td(T^*_\mathbb{C}M)
\end{equation}
over the hypersurfaces
$$
\mathbb{R}_t\times S(T^*M\times\mathbb{R}_\tau)\quad\text{and}\quad
S(T^* M\times \mathbb{R}^{2}_{t,\tau})\quad \text{in }T^* M\times \mathbb{R}^{2}_{t,\tau}
$$
are equal  (because the form \eqref{eq-q8} is homogeneous of order zero and is induced by a form on the  cosphere bundle $S(T^* M\times \mathbb{R}^{2}_{t,\tau})$).
\end{proof}

\subsection{The Proof of the Main Theorem}

Combining the equalities \eqref{eq-eq1}, \eqref{ind-luke1}, \eqref{eq-mu4}, \eqref{eq-mu3} we get the desired equality \eqref{eq-index1}:
$$
\ind D=\ind \widetilde{D}\# A=\ind \mathcal{D}=\ind_t \sigma(\mathcal{D})=\ind_t \sigma(\widetilde{D}\# A)=\ind_t \sigma(D).
$$
The proof of the Index Theorem~\ref{th1} is now complete.

 
 \vspace{8mm}

{\rm
\noindent Peoples' Friendship University of Russia and \newline
\noindent Institut f\"ur Analysis, Leibniz Universit\"at Hannover, Germany\newline 
\noindent antonsavin@mail.ru \vspace{5mm}
 
\noindent Institut f\"ur Analysis, Leibniz Universit\"at Hannover, Germany \newline
\noindent schrohe@math.uni-hannover.de \vspace{5mm}

\noindent Peoples' Friendship University of Russia and \newline
\noindent Institut f\"ur Analysis, Leibniz Universit\"at Hannover, Germany\newline 
\noindent sternin@mail.ru \vspace{5mm}}

\end{document}